\newtheorem{theorem}{Theorem}[section]
\newtheorem{proposition}[theorem]{Proposition}
\newtheorem{corollary}[theorem]{Corollary}
\theoremstyle{definition}
\newtheorem{definition}[theorem]{Definition}
\theoremstyle{remark}
\newtheorem{remark}[theorem]{Remark}
\numberwithin{equation}{section}
\begin{document}

\title{Characterizations of functions in wandering subspaces of the Bergman Shift via the Hardy space of the Bidisc}

\author{Sun Shunhua}
\address{Department of Mathematics, Jiaxing University, Jiaxing, Zhejiang, China 314001}
\email{shsun@mail.zjxu.edu.cn}

\author{Xu Anjian$^{\ast}$}
\address{School of Mathematical Sciences,
Chongqing University of Technology, Chongqing, China 400054}
\email{xuaj@cqut.edu.cn}
\thanks{The second author was Supported by NSFC (11871127).}
\thanks{* Correponding author}

\subjclass[2000]{Primary 47B35; Secondary 47B32}

\date{March 6, 2022 and, in revised form, June 22, 2022.}


\keywords{Invariant Subspace, Wandering Subspace, Bergman Shift}

\begin{abstract}
Let $\mathcal{W}$ be the corresponding wandering subspace of an invariant subspace of the Bergman shift. By identifying the Bergman space with $H^2(\mathbb{D}^2)\ominus[z-w]$, a sufficient and necessary conditions of a closed subspace of $H^2(\mathbb{D}^2)\ominus[z-w]$ to be a wandering subspace of an invariant subspace is given also, and a functional charaterization and a coefficient characterization for a function in a wandering subspace are given. As a byproduct, we proved that for two invariant subspaces $\mathcal{M}$, $\mathcal{N}$ with $\mathcal{M}\supsetneq\mathcal{N}$ and $dim(\mathcal{N}\ominus B\mathcal{N})<\infty$ $dim(\mathcal{M}\ominus B\mathcal{M})=\infty$, then there is an invariant subspace $\mathcal{L}$ such that $\mathcal{M}\supsetneq\mathcal{L}\supsetneq\mathcal{N}$. Finally, we define an operator from one wandering subspace to another, and get a decomposition theorem for such an operator which is related to the universal property of the Bergman shift.
\end{abstract}

\maketitle

\section{Introduction}
Let $\mathbb{D}$ be the open unit disk in the complex plane $\mathbb{C}$, $\mathbb{T}$ is the unit circle which is the boundary of $\mathbb{D}$. $H^{2}(\mathbb{T})$ denotes the Hardy space on $\mathbb{T}$. Let $dA$ denote Lebesgue area measure on $\mathbb{D}$, normalized so that the measure of $\mathbb{D}$ equals $1$. The Bergman space $L_{a}^{2}(\mathbb{D})$ is
the Hilbert space consisting of the analytic functions on $\mathbb{D}$ that are also in the space $L^2(\mathbb{D},dA)$ of square integrable functions on $\mathbb{D}$. The operator of multiplication by the coordinate function $z$ on the Bergman space is called the Bergman shift. The torus $\mathbb{T}^{2}$ is the Cartesian product $\mathbb{T}\times\mathbb{T}$. The Hardy space $H^2(\mathbb{T}^{2})$ over the bidisk is $H^2(\mathbb{T})\otimes H^2(\mathbb{T})$.

For an operator $T$ on a Hilbert space $H$, the structure of an operator is an important research subject in operator theory\cite{BCHL}. A subspace $M$ of $H$ is called an invariant subspace of $T$ if $TM\subset M$. The famous invariant subspace problem is one of the central problems in operator theory. If $M$ is an invariant subspace of $T$, $M\ominus TM$ is called the wandering subspace of $T$ on $M$. The famous Beurling theorem \cite{B1949} says that all invariant subspaces of the unilateral shift which can be identified as multiplication by $z$ on the Hardy space are generated by their wandering subspaces with dimension 1. The study of the Bergman shift attract great attentions since its universal property\cite{DS2004}. The remarkable Beurling type theorem of Aleman, Richter and Sundberg \cite{ARS1996} implies every invariant subspace of the Bergman shift is generated by its wandering subspace with possible dimension from 1 to $\infty$.

For each integer $n\geq 0$, let
\[p_{n}(z,w)=\sum_{i=0}^{n}z^{i}w^{n-i}.\]
Rudin \cite{R1969} defined $\mathcal{H}$ to be the subspace of $H^{2}(\mathbb{T}^{2})$ spanned by functions $\{p_{n}\}_{n=0}^{\infty}$. Thus every function in $\mathcal{H}$ is symmetric with respect to $z$ and $w$. Let $[z-w]$ denote the closure of $(z-w)H^{2}(\mathbb{T}^{2})$ in $H^{2}(\mathbb{T}^{2})$. As every function in $[z-w]$ is orthogonal to each $p_{n}$, it is well-known and easy to see that
\[H^{2}(\mathbb{T}^{2})=\mathcal{H}\oplus[z-w].\]
For a subspace $\mathcal{M}$ of $L^{2}(\mathbb{T}^{2})$ and let $P_{\mathcal{M}}$ denote the orthogonal projection from $L^{2}(\mathbb{T}^{2})$ onto $\mathcal{M}$. The Toeplitz operator on $H^{2}(\mathbb{T}^{2})$ with symbol $f$ in $L^{\infty}(\mathbb{T}^{2})$ is defined by
\[T_{f}(h)=P_{H^{2}(\mathbb{T}^{2})}(fh),\]
for $h$ in $H^{2}(\mathbb{T}^{2})$. It is not difficult to see that $T_{z}$ and $T_{w}$ are a pair of doubly commuting pure isometries on $H^{2}(\mathbb{T}^{2})$. It is easy to check that
\[P_{\mathcal{H}}T_{z}|_{\mathcal{H}}=P_{\mathcal{H}}T_{w}|_{\mathcal{H}}.\]
Let $B$ denote the operator above. It was shown explicitly in \cite{SY1989} and implicitly in \cite{DP1989} that $B$ is unitarily equivalent to the Bergman shift, the multiplication operator by the coordinate function $z$ on the Bergman space $L_{a}^{2}$ via the following unitary operator $U:L_{a}^{2}(\mathbb{D})\rightarrow \mathcal{H}$,
\[Uz^{n}=\frac{p_{n}(z,w)}{n+1}.\]
So the Bergman shift is lifted up as the compression of an isometry on a nice subspace $\mathcal{H}$ of $H^{2}$. In the rest of the paper we identify the Bergman shift with the operator $B$, and so an invariant subspace of the Bergman shift can be identified as a subspace of $\mathcal{H}$ which is invariant under $B$.

In last twenty years, it become an important tool to use the theory of multivariable operators and functions to study a single operator and the functions of one variable in the study of the Bergman shift. The idea is to lift the Bergman shift up as the compression of a commuting pair of isometries on a subspace of the Hardy space of the bidisk which was given in Rudin's book \cite{R1969}, used in studying the Hilbert modules by R. Douglas and V Paulsen \cite{DP1989}, operator theory in the Hardy space over the bidisk by R. Douglas and R. Yang \cite{DY2000,Y2001,Y2002}, the lattice of the invariant subspaces of the Bergman shift by S. Richter, the reducing subspaces of the Bergman shift by Zheng, Guo, Zhong, and the authors \cite{GSZZ2009,SZZ2008,SZZ2010}, the Beurling type theorem of the Bergman shift by Zheng and the fisrt author \cite{SZ2009} etc. In this paper, the idea is used to study the characterization of a function in the wandering subspace of any invariant subspace of the Bergman shift.

\section{Charaterizations of a wandering subspace}
For a function in $\mathcal{H}$, we have the following seires reprsentation.
\begin{proposition}
	For any $q(z,w)\in\mathcal{H}$, then
	\[q(z,w)=\sum_{j=0}^{\infty}z^{j}T_{w}^{\ast j}q(0,w)\]
\end{proposition}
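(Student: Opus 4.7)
The plan is to expand an arbitrary $q \in \mathcal{H}$ as a power series in $z$ with coefficients in $H^2(\mathbb{T})$, and then identify those coefficients using the symmetric structure of $\mathcal{H}$.

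First I would recall that every $q \in H^2(\mathbb{T}^2)$ admits the orthogonal decomposition $q(z,w) = \sum_{j=0}^{\infty} z^j q_j(w)$ with $q_j \in H^2(\mathbb{T})$ and $\|q\|^2 = \sum_j \|q_j\|^2$, this being just the regrouping of the bivariate Taylor series $q(z,w) = \sum_{j,k \geq 0} a_{jk} z^j w^k$ by the $z$-index. In particular $q_0(w) = q(0,w)$, and since $T_w^*$ acts as the backward shift in $w$ while preserving powers of $z$, a direct computation gives $T_w^{*j} q(0,w) = \sum_{k \geq 0} a_{0,k+j}\, w^k$. Hence the target identity amounts to showing that $q_j = T_w^{*j} q_0$ for every $j$, i.e.\ that the coefficients of $q$ satisfy $a_{jk} = a_{0,j+k}$.

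Next I would verify this coefficient relation on the generators $p_n$. Since $p_n(0,w) = w^n$ and $T_w^{*j} w^n = w^{n-j}$ for $j \leq n$ (and $0$ otherwise), one has $\sum_j z^j T_w^{*j} p_n(0,w) = \sum_{j=0}^n z^j w^{n-j} = p_n(z,w)$. Equivalently, the bivariate coefficients of $p_n$ depend only on $j+k$, so $a_{jk} = a_{0,j+k}$ for $p_n$. This linear relation is preserved under finite $\mathbb{C}$-linear combinations, and since each Taylor coefficient $q \mapsto a_{jk}$ is a bounded linear functional on $H^2(\mathbb{T}^2)$, it also survives $H^2$-limits. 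Therefore it holds for every $q$ in the closed span $\mathcal{H}$ of $\{p_n\}$.

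Summing over $j$ then yields the claimed series representation, with convergence in $H^2(\mathbb{T}^2)$ already furnished by $\sum_j \|q_j\|^2 = \|q\|^2 < \infty$. The only step that warrants a moment's care is the passage from the dense subspace $\mathrm{span}\{p_n\}$ to all of $\mathcal{H}$, but as noted this is immediate from continuity of the coefficient functionals; no deeper obstacle appears.
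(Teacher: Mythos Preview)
Your proof is correct and follows essentially the same route as the paper's. The paper expands $q$ directly in the orthonormal basis $\{p_n/\sqrt{n+1}\}$ of $\mathcal{H}$ and computes both sides from that single expansion, whereas you phrase the same computation as verifying the coefficient relation $a_{jk}=a_{0,j+k}$ on the generators $p_n$ and then passing to the closure by continuity; the content is the same.
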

\begin{proof}
	The identity follows from 
	\begin{eqnarray}
		&q(z,w)&=\sum_{n=0}^{\infty}a_{n}\frac{p_{n}(z,w)}{\sqrt{n+1}}=\sum_{n=0}^{\infty}a_{n}\frac{1}{\sqrt{n+1}}\sum_{j=0}^{n}z^{j}w^{n-j}\nonumber\\
		&&=\sum_{j=0}^{\infty}z^{j}\sum_{n=j}^{\infty}a_{n}\frac{1}{\sqrt{n+1}}w^{n-j},\nonumber
	\end{eqnarray}
	since $q(z,w)\in\mathcal{H}$, and 
	\begin{eqnarray}
		&T_{w}^{\ast j}q(0,w)&=T_{w}^{\ast j}\sum_{n=0}^{\infty}a_{n}\frac{p_{n}(0,w)}{\sqrt{n+1}}=T_{w}^{\ast j}\sum_{n=0}^{\infty}a_{n}\frac{w^{n}}{\sqrt{n+1}}\nonumber\\
		&&=\sum_{n=j}^{\infty}a_{n}\frac{1}{\sqrt{n+1}}w^{n-j}.\nonumber
	\end{eqnarray}
\end{proof}

By Proposition 2.1, we can get following characterization of a function in the wandering subspace for an invariant subspace.
\begin{theorem}
	If $\mathcal{M}$ is an invariant subspace of the Bergman shift $B$, and $q(z,w)$ is a function in the wandering subspace $\mathcal{M}\ominus B\mathcal{M}$ of $\mathcal{M}$, then
	\[\sum_{j=0}^{\infty}|T_{w}^{\ast j}q(0,w)|^{2}=constant, \ \forall w\in\partial\mathbb{D}.\eqno{(1)}\] 
\end{theorem}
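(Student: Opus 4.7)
The plan is to interpret the left-hand side of $(1)$ as a slice integral of $|q|^{2}$ over the $z$-circle and to reduce constancy in $w$ to the vanishing of nonzero Fourier coefficients. By Proposition~2.1, $q(z,w)=\sum_{j=0}^{\infty}z^{j}f_{j}(w)$ with $f_{j}(w)=T_{w}^{*j}q(0,w)$. Since $\{z^{j}\}_{j\ge 0}$ is orthonormal on $\mathbb{T}$ in normalized Lebesgue measure $dm$, for a.e.\ $w\in\mathbb{T}$ one has
\[
F(w):=\sum_{j=0}^{\infty}|f_{j}(w)|^{2}=\int_{\mathbb{T}}|q(z,w)|^{2}\,dm(z),
\]
which is an $L^{1}$ function of $w$ on $\mathbb{T}$. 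It therefore suffices to show $\widehat{F}(n)=0$ for every $n\neq 0$.

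For $n\ge 1$, Fubini gives $\widehat{F}(n)=\int_{\mathbb{T}^{2}}|q|^{2}\bar w^{n}\,dm\otimes dm=\langle q,T_{w}^{n}q\rangle_{H^{2}(\mathbb{T}^{2})}$, and the case $n\le -1$ follows by complex conjugation. The key algebraic observation is that $\mathcal{H}$ is invariant under $T_{w}^{*}$ (immediate from $T_{w}^{*}p_{n}=p_{n-1}$), and combining this with $B=P_{\mathcal{H}}T_{w}|_{\mathcal{H}}$ gives $B^{*}|_{\mathcal{H}}=T_{w}^{*}|_{\mathcal{H}}$. Iterating yields $T_{w}^{*n}q=B^{*n}q$ for all $n\ge 0$ and $q\in\mathcal{H}$, so
\[
\langle q,T_{w}^{n}q\rangle=\langle T_{w}^{*n}q,q\rangle=\langle B^{*n}q,q\rangle=\langle q,B^{n}q\rangle.
\]
Since $q\in\mathcal{M}$ forces $B^{n}q\in B\mathcal{M}$ for every $n\ge 1$ while $q\perp B\mathcal{M}$, this inner product vanishes.

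The main subtlety I anticipate is the bridge between the compressed operator $B$ on $\mathcal{H}$, in which the wandering hypothesis is naturally phrased, and the bidisc shift $T_{w}$ on $H^{2}(\mathbb{T}^{2})$, on which the Fourier analysis takes place. The identification $B^{*n}q=T_{w}^{*n}q$ for $q\in\mathcal{H}$ is exactly what removes the projection $P_{\mathcal{H}}$ from iterated adjoints and lets the wandering condition push through cleanly; without it one would be forced to peel off projections term by term. Once this bridge is in place the remainder is bookkeeping, and the constant value of $F$ is automatically $\widehat{F}(0)=\|q\|_{H^{2}(\mathbb{T}^{2})}^{2}$.
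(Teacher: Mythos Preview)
Your proof is correct and follows essentially the same route as the paper: expand $q$ via Proposition~2.1, recognize $\sum_{j\ge 0}|T_{w}^{*j}q(0,w)|^{2}$ as the function whose $k$th Fourier coefficient is $\langle q,T_{w}^{k}q\rangle_{H^{2}(\mathbb{T}^{2})}$, and then show this inner product vanishes for $k\neq 0$ using the wandering hypothesis. The only cosmetic difference is in the bridge step: the paper writes $\langle z^{k}q,q\rangle=\langle T_{w}^{k}q,q\rangle$ via the orthogonality $\mathcal{H}\perp[z-w]$ (since $(z^{k}-w^{k})q\in[z-w]$) and then invokes $q\perp B\mathcal{M}$, whereas you go directly through $T_{w}^{*}|_{\mathcal{H}}=B^{*}$; these are equivalent formulations of the same fact that $\mathcal{H}$ is $T_{w}^{*}$-invariant.
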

\begin{proof}
	Since $q(z,w)\in\mathcal{M}\ominus B\mathcal{M}$ and $\mathcal{M}\perp[z-w]$, for any $k\neq0$, we have
	\begin{eqnarray}
		&0&=\langle z^{k}q(z,w),q(z,w)\rangle_{H^{2}(\mathbb{T}^{2})}\nonumber\\
		&&=\langle T_{w}^{k}q(z,w),q(z,w)\rangle_{H^{2}(\mathbb{T}^{2})}\nonumber\\
		&&=\sum_{j=0}^{\infty}\langle w^{k}T_{w}^{\ast j}q(0,w),T_{w}^{\ast j}q(0,w)\rangle_{H^{2}(\mathbb{T})}\nonumber\\
		&&=\frac{1}{2\pi}\int_{0}^{2\pi}e^{ik\theta}\sum_{j=0}^{\infty}T_{w}^{\ast j}q(0,w)\overline{T_{w}^{\ast j}q(0,w)}d\theta\nonumber\\
		&&=\frac{1}{2\pi}\int_{0}^{2\pi}e^{ik\theta}\sum_{j=0}^{\infty}|T_{w}^{\ast j}q(0,w)|^{2}d\theta\nonumber
	\end{eqnarray}
	by the proposition. It means that $\sum_{j=0}^{\infty}|T_{w}^{\ast j}q(0,w)|^{2}$ is orthogonal to $w^{k}$ for any $k\neq0$, so it must be a constant.
\end{proof}
\indent
One the other side, the converse of the above theorem is true also.
\begin{theorem}
	If $\mathcal{W}$ is a closed subspace of $\mathcal{H}$, and every function $q(z,w)\in \mathcal{W}$ satisfies
	\[\sum_{j=0}^{\infty}|T_{w}^{\ast j}q(0,w)|^{2}=constant, \ \forall w\in\partial\mathbb{D}.\]
	Then there is a minimal invariant subspace $\mathcal{M}$ of $B$ such that
	\[\mathcal{M}\ominus B\mathcal{M}=\mathcal{W}.\]
\end{theorem}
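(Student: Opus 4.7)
The plan is to let $\mathcal{M}$ be the smallest closed $B$-invariant subspace containing $\mathcal{W}$, namely
\[
\mathcal{M}:=\overline{\bigvee_{n\geq 0} B^n\mathcal{W}}.
\]
Minimality is then automatic — any invariant subspace whose wandering subspace is $\mathcal{W}$ must contain $\mathcal{W}$ and hence $\mathcal{M}$ — so the whole task reduces to verifying that $\mathcal{M}\ominus\overline{B\mathcal{M}}=\mathcal{W}$.

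First I would polarize the hypothesis. The calculation in the proof of Theorem~2.2 applies verbatim to a pair of vectors and yields, for any $q_1,q_2\in\mathcal{H}$ and $k\in\mathbb{Z}$,
\[
\langle T_z^k q_1,q_2\rangle=\frac{1}{2\pi}\int_0^{2\pi}e^{ik\theta}\,\Phi(q_1,q_2)(e^{i\theta})\,d\theta,
\]
where $\Phi(q_1,q_2)(w):=\sum_{j=0}^{\infty}T_w^{*j}q_1(0,w)\,\overline{T_w^{*j}q_2(0,w)}$; the replacement of $T_z^k$ by $T_w^k$ uses $(T_z^k-T_w^k)q_1\in[z-w]\perp q_2$, and Proposition~2.1 supplies the series expansion. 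Applying the polarization identity pointwise in $w$ to the sesquilinear form $(q_1,q_2)\mapsto\Phi(q_1,q_2)(w)$, the hypothesis — that $\Phi(q,q)(w)$ is constant in $w$ for every $q\in\mathcal{W}$ — upgrades, because $\mathcal{W}$ is a linear subspace, to the statement that $\Phi(q_1,q_2)(w)$ is constant in $w$ for all $q_1,q_2\in\mathcal{W}$. Reading off Fourier coefficients now gives $\langle T_z^k q_1,q_2\rangle=0$ for all $q_1,q_2\in\mathcal{W}$ and every $k\neq 0$.

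Next I would transfer this identity from $T_z$ to $B$. Since $[z-w]$ is $T_z$-invariant, $P_{\mathcal{H}}T_z$ annihilates $[z-w]$, so $P_{\mathcal{H}}T_z=P_{\mathcal{H}}T_zP_{\mathcal{H}}$, and induction yields $(P_{\mathcal{H}}T_z)^n=P_{\mathcal{H}}T_z^n$, i.e., $B^n=P_{\mathcal{H}}T_z^n|_{\mathcal{H}}$ for every $n\geq 0$. Hence, for $p,q\in\mathcal{W}$ and $n\geq 1$,
\[
\langle p,B^n q\rangle=\langle p,P_{\mathcal{H}}T_z^n q\rangle=\langle p,T_z^n q\rangle=0,
\]
so $\mathcal{W}\perp B^n\mathcal{W}$ for every $n\geq 1$, and consequently $\mathcal{W}\perp\overline{B\mathcal{M}}$.

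To finish: $\mathcal{M}$ is the closed linear span of $\mathcal{W}\cup\bigcup_{n\geq 1}B^n\mathcal{W}$, and the second piece lies inside $\overline{B\mathcal{M}}$; since the two closed subspaces $\mathcal{W}$ and $\overline{B\mathcal{M}}$ are orthogonal, they form an orthogonal direct sum whose closure equals $\mathcal{M}$, which forces $\mathcal{M}\ominus\overline{B\mathcal{M}}=\mathcal{W}$. The hard part will be the polarization step — turning the diagonal quadratic hypothesis on $\mathcal{W}$ into the full bilinear identity — together with the identification $B^n=P_{\mathcal{H}}T_z^n|_{\mathcal{H}}$, which is what lets the Hardy-bidisc orthogonality $\langle T_z^n q_1,q_2\rangle=0$ feed back into the Bergman-level wandering condition $\mathcal{W}\perp B\mathcal{M}$.
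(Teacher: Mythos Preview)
Your proof is correct and follows the same construction as the paper: set $\mathcal{M}$ equal to the closed $B$-invariant subspace generated by $\mathcal{W}$ and then verify $\mathcal{M}\ominus B\mathcal{M}=\mathcal{W}$. The paper asserts the inclusion $\mathcal{W}\subseteq\mathcal{M}\ominus B\mathcal{M}$ as ``clear'' and then dispatches the reverse inclusion by an approximation argument; your polarization step together with the identity $B^{n}=P_{\mathcal{H}}T_{z}^{n}|_{\mathcal{H}}$ is exactly what is needed to justify that ``clear'' inclusion, so you have in fact supplied the detail the paper omits.
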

\begin{proof}
	In fact, let $\mathcal{M}=Span{B^{j}\mathcal{W}:j=0,1,2,\cdots}$, then $\mathcal{M}$ is an invariant subspace and it is clear that
	\[\mathcal{W}\subseteq\mathcal{M}\ominus B\mathcal{M}.\]
	Now if $q_{0}\in(\mathcal{M}\ominus B\mathcal{M})\ominus\mathcal{W}$, by definition of $\mathcal{M}$, there exist polynomials $p_{n}$ such that $\sum\limits_{j=1}^{N}p_{j}(B)q_{n}\rightarrow q_{0}$, where $q_{n}\in\mathcal{W}$. So
	\begin{eqnarray}
		&\|q_{0}\|^{2}_{H^{2}(\mathbb{T}^{2})}&=\lim_{N\rightarrow\infty}\langle\sum_{j=0}^{N}p_{n}(B)q_{n},q_{0}\rangle\nonumber\\
		&&=\lim_{N\rightarrow\infty}\sum_{j=0}^{N}\langle q_{n},(p_{n}(B))^{\ast}q_{0}\rangle\nonumber\\
		&&=0\nonumber
	\end{eqnarray}
	since $(p_{n}(B))^{\ast}q_{0}\perp\mathcal{M}$ and $q_{n}\perp q_{0}$.
\end{proof}
\indent
It is easy to get a necessary and sufficient conditions of a closed subspace in $\mathcal{H}$ to be the wandering subspace of an invariant subspace $\mathcal{M}$.
\begin{corollary}
	An closed subspace $\mathcal{W}$ of $\mathcal{H}$ is the wandering subspace of an invariant subspace $\mathcal{M}$ if and only if every function $q(z,w)$ in $\mathcal{W}$ satisfies
	\[\sum_{j=0}^{\infty}|T_{w}^{\ast j}q(0,w)|^{2}=constant, \ \forall w\in\partial\mathbb{D}.\]
\end{corollary}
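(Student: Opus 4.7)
The plan is to observe that this corollary is a direct combination of the two preceding theorems; no new machinery is required, just a careful bookkeeping of the two implications.

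For the ($\Rightarrow$) direction I would assume $\mathcal{W}=\mathcal{M}\ominus B\mathcal{M}$ for some invariant subspace $\mathcal{M}$ of $B$. Then every $q\in\mathcal{W}$ lies in the wandering subspace of a fixed invariant subspace, so the hypothesis of Theorem 2.2 is satisfied pointwise on $\mathcal{W}$, and the identity
\[
\sum_{j=0}^{\infty}|T_{w}^{\ast j}q(0,w)|^{2}=\mathrm{constant},\qquad w\in\partial\mathbb{D},
\]
follows immediately. No additional estimates or choices are needed.

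For the ($\Leftarrow$) direction I would invoke Theorem 2.3 directly. Assuming every $q\in\mathcal{W}$ satisfies the displayed identity, that theorem produces the (minimal) invariant subspace $\mathcal{M}=\overline{\mathrm{Span}}\{B^{j}\mathcal{W}:j\geq 0\}$ together with the identification $\mathcal{M}\ominus B\mathcal{M}=\mathcal{W}$, which is exactly the conclusion that $\mathcal{W}$ is the wandering subspace of an invariant subspace.

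The only point where one should be slightly careful, and which I would verify explicitly, is the following: the single-function condition in the hypothesis guarantees $\langle B^{k}q,q\rangle=0$ for $k\neq 0$ and $q\in\mathcal{W}$, but the inclusion $\mathcal{W}\subseteq\mathcal{M}\ominus B\mathcal{M}$ used inside Theorem 2.3 really requires the cross-relation $\langle B^{k}q_{2},q_{1}\rangle=0$ for all $q_{1},q_{2}\in\mathcal{W}$ and $k\geq 1$. Since $\mathcal{W}$ is a closed linear subspace, $q_{1}+q_{2}$ and $q_{1}+iq_{2}$ also satisfy the identity, and a standard polarization identity extracts exactly this cross-orthogonality from the diagonal case. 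This is the one line that one must not skip, but it is not a genuine obstacle; beyond it, the corollary is just the conjunction of Theorems 2.2 and 2.3.
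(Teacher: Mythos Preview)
Your proposal is correct and matches the paper's approach exactly: the paper does not give a separate proof of the corollary but treats it as the immediate conjunction of Theorems~2.2 and~2.3. Your added polarization remark is in fact a point the paper glosses over in the proof of Theorem~2.3 (where ``$\mathcal{W}\subseteq\mathcal{M}\ominus B\mathcal{M}$'' is declared clear) and only touches on afterwards in Remark~2.5, so your write-up is, if anything, slightly more complete than the original.
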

\begin{remark}
	It is easy to check that if $q_{1},q_{2}$ satisfy $(1)$, then every function in $Span\{q_{1},q_{2}\}$ satisfies $(1)$ if and only if 
	\[\sum_{j=0}^{\infty}T_{w}^{\ast j}q_{1}(0,w)\overline{T_{w}^{\ast j}q_{2}(0,w)}=constant, \ \forall w\in\partial\mathbb{D}.\]
\end{remark}

The famous invariant subspace problem is equivalent to the universal property of the Bergman shift, i.e., for two invariant subspaces $\mathcal{M}$, $\mathcal{N}$ with $\mathcal{M}\supsetneq\mathcal{N}$ and $dim(\mathcal{N}\ominus B\mathcal{N})=dim(\mathcal{M}\ominus B\mathcal{M})=\infty$, is there an invariant subspace $\mathcal{L}$ such that $\mathcal{M}\supsetneq\mathcal{L}\supsetneq\mathcal{N}$? And so it is natural to ask for two invariant subspaces $\mathcal{M}$ $\mathcal{N}$ with $\mathcal{M}\supsetneq\mathcal{N}$, is there an invariant subspace $\mathcal{L}$ such that $\mathcal{M}\supsetneq\mathcal{L}\supsetneq\mathcal{N}$? We get the following interesting theorem in a special case.
 
\begin{theorem}
	For two invariant subspaces $\mathcal{M}$ and $\mathcal{N}$ of $B$, if $\mathcal{M}\supsetneq\mathcal{N}$ and $\mathcal{N}\ominus B\mathcal{N}$ is finite dimensional, and $\dim(\mathcal{M}\ominus B\mathcal{M})=\infty$. Then there is an invariant subspace $\mathcal{L}$ of $B$ such that 
	\[\mathcal{N}\subsetneq\mathcal{L}\subsetneq\mathcal{M}.\]
\end{theorem}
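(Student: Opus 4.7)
The plan is to obtain $\mathcal{L}$ by enlarging $\mathcal{N}$ with a single extra vector from $\mathcal{M}\setminus\mathcal{N}$ and then separate it from $\mathcal{M}$ via a wandering-dimension count. Since $\mathcal{M}\supsetneq\mathcal{N}$, pick any $f\in\mathcal{M}\setminus\mathcal{N}$ and let $\mathcal{L}$ be the smallest closed $B$-invariant subspace of $\mathcal{H}$ that contains $\mathcal{N}\cup\{f\}$. Then $\mathcal{N}\subseteq\mathcal{L}\subseteq\mathcal{M}$ is automatic, and $\mathcal{L}\supsetneq\mathcal{N}$ because $f\in\mathcal{L}\setminus\mathcal{N}$. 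The whole weight of the argument is thus to prove $\mathcal{L}\neq\mathcal{M}$.

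The key tool will be the following consequence of the Aleman--Richter--Sundberg generation theorem quoted in the introduction: if a closed $B$-invariant subspace $\mathcal{K}$ is generated (as a closed invariant subspace) by a set $S\subseteq\mathcal{K}$, then $\dim(\mathcal{K}\ominus B\mathcal{K})\le\dim\mathrm{span}(S)$. Indeed, $\mathcal{K}$ is the closure of the linear span of $\{B^{j}s:s\in S,\,j\ge 0\}$; for $j\ge 1$ the vector $B^{j}s$ lies in $B\mathcal{K}$, so the orthogonal projection $P$ onto the wandering subspace $\mathcal{K}\ominus B\mathcal{K}$ annihilates it. Continuity of $P$ then makes $P(\mathrm{span}\,S)$ dense in $\mathcal{K}\ominus B\mathcal{K}$, and since a finite-dimensional subspace of a Hilbert space is automatically closed, the claimed dimension bound follows.

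Now apply this to $\mathcal{L}$. By Aleman--Richter--Sundberg, the invariant subspace $\mathcal{N}$ is itself generated by $\mathcal{N}\ominus B\mathcal{N}$, so $\mathcal{L}$ is the closed $B$-invariant subspace generated by the set $S=(\mathcal{N}\ominus B\mathcal{N})\cup\{f\}$. Hence
\[\dim(\mathcal{L}\ominus B\mathcal{L})\le\dim(\mathcal{N}\ominus B\mathcal{N})+1<\infty,\]
which together with $\dim(\mathcal{M}\ominus B\mathcal{M})=\infty$ forces $\mathcal{L}\subsetneq\mathcal{M}$. The only delicate point is establishing the wandering-dimension inequality rigorously; once ARS supplies a finite generating set for $\mathcal{N}$, the remainder is just continuity of the projection onto the wandering subspace, so the proof is short and essentially one-shot. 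One could alternatively list an orthonormal basis $\{e_{k}\}$ of $\mathcal{M}\ominus B\mathcal{M}$ and add the $e_{k}$'s to $\mathcal{N}$ one at a time, but the argument above already yields strict containment on both sides in one stroke.
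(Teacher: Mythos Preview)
Your argument is correct, and it takes a genuinely different route from the paper's own proof. The paper first picks a unit vector $\widehat{q}\in\mathcal{M}\ominus B\mathcal{M}$ orthogonal to $\mathcal{N}\ominus B\mathcal{N}$, sets $\mathcal{W}=(\mathcal{N}\ominus B\mathcal{N})\oplus\mathbb{C}\widehat{q}$, and then uses the wandering--subspace characterization developed earlier (Theorem~2.2, Theorem~2.3, Remark~2.5) to verify that every element of $\mathcal{W}$ satisfies condition~(1); this yields a minimal invariant subspace $\mathcal{L}$ with $\mathcal{L}\ominus B\mathcal{L}=\mathcal{W}$ exactly, whence $\dim(\mathcal{L}\ominus B\mathcal{L})=K+1<\infty$ forces $\mathcal{L}\subsetneq\mathcal{M}$. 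You bypass this entire machinery: starting from an arbitrary $f\in\mathcal{M}\setminus\mathcal{N}$, you bound $\dim(\mathcal{L}\ominus B\mathcal{L})$ directly by the elementary projection lemma that the wandering dimension never exceeds the dimension of a generating set, invoking Aleman--Richter--Sundberg only to pass from $\mathcal{N}$ to the finite set $\mathcal{N}\ominus B\mathcal{N}$. Your approach is shorter and more robust (it works for any $f$, not just one chosen from $\mathcal{M}\ominus B\mathcal{M}$), while the paper's route has the advantage of exhibiting $\mathcal{L}\ominus B\mathcal{L}$ explicitly and of illustrating the paper's central characterization of wandering subspaces. Interestingly, the alternative you mention at the end---adding a basis vector of $\mathcal{M}\ominus B\mathcal{M}$---is precisely what the paper does.
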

\begin{proof}
	Assume that $dim(\mathcal{N}\ominus B\mathcal{N})=K$ and let $\{q_{j}\}_{j=1}^{K}$ be an orthonormal basis of $\mathcal{N}\ominus B\mathcal{N}$. We can choose an function $\widehat{q}$ with norm 1 in $\mathcal{M}\ominus B\mathcal{M}$ which is orthogonal to $q_{j}$ for $1\leq j\leq K$ since $\dim(\mathcal{M}\ominus B\mathcal{M})=\infty$. 
	Let $\mathcal{W}$ be the closed linear span of $\{q_{j}\}_{j=1}^{K}$ and $\widehat{q}$. 
	
	Next we show that every function $q$ in $\mathcal{W}$ satisfying
	\[\sum_{j=0}^{\infty}|T_{w}^{\ast j}q(0,w)|^{2}=constant, \ \forall w\in\partial\mathbb{D}.\]
	For simplicity and whitout loss of generality, we prove the statement for the case $K=1$. In this case $q=\lambda q_{1}+\mu\widehat{q}$ where $\lambda,\mu$ are two constants, and so 
	\[\begin{array}{ll}
		\sum\limits_{j=0}^{\infty}|T_{w}^{\ast j}q(0,w)|^{2}&=\sum\limits_{j=0}^{\infty}|\lambda T_{w}^{\ast j}q_{1}(0,w)+\mu T_{w}^{\ast j}\widehat{q}(0,w)|^{2}\\
		&=|\lambda|^2\sum\limits_{j=0}^{\infty} |T_{w}^{\ast j}q_{1}(0,w)|^{2}+|\mu|^{2}\sum\limits_{j=0}^{\infty} |T_{w}^{\ast j}\widehat{q}(0,w)|^{2}\\
		&\hspace{20pt}+2Re\lambda\overline{\mu}\sum\limits_{j=0}^{\infty}T_{w}^{\ast j}q_{1}(0,w)\overline{T_{w}^{\ast j}\widehat{q}(0,w)}\\
		&=constant,
	\end{array}\]
    by Theorem 2.2 and Remark 2.5 since $q_{1}$ and $\widehat{q}$ are in wandering subspaces $\mathcal{N}\ominus B\mathcal{N}$ and $\mathcal{M}\ominus B\mathcal{M}$ respectively.
	
	Then there is a minimal inariant subspace $\mathcal{L}$ of $B$ such that 
	\[\mathcal{L}\ominus B\mathcal{L}=\mathcal{W},\]
	by Theorem 2.3.	It is clear that $\mathcal{N}\subsetneq\mathcal{L}\subsetneq\mathcal{M}$, and so the theorem follows.
\end{proof}

\begin{remark}
	If on the Dirichlet space $\mathcal{D}(\mathbb{D})$, we define the following inner product
	\[\langle f,g\rangle_{\mathcal{D}}=\frac{1}{\pi}\iint_{\mathbb{D}}(wf(w))'\overline{(wg(w))'}dA(w)\]
	for $f,g\in\mathcal{D}(\mathbb{D})$. Then  $q_{f}=\sum\limits_{j=0}^{\infty}z^{j}T_{w}^{\ast j}f$ and $q_{g}=\sum\limits_{j=0}^{\infty}z^{j}T_{w}^{\ast j}g$ are in $\mathcal{H}$. Moreover, if $f(w)=\sum\limits_{k=0}f_{k}w^{k}$ and $g(w)=\sum\limits_{k=0}g_{k}w^{k}$, then we have
	\begin{eqnarray}
		&\langle q_{f},q_{g}\rangle_{H^{2}(\mathbb{T}^{2})}&=\sum_{j=0}^{\infty}\langle T_{w}^{\ast j}f,T_{w}^{\ast j}g\rangle_{H^{2}(\mathbb{T})}=\sum_{j=0}^{\infty}\langle\sum_{k=j}^{\infty}f_{k}w^{k-j},\sum_{m=j}^{\infty}g_{m}w^{m-j}\rangle\nonumber\\
		&&=\sum_{j=0}^{\infty}\sum_{k=j}^{\infty}f_{k}\overline{g_{k}}=\sum_{k=0}^{\infty}\sum_{j=0}^{k}f_{k}\overline{g_{k}}=\sum_{k=0}^{\infty}(k+1)f_{k}\overline{g_{k}}\nonumber\\
		&&=\langle f,g\rangle_{\mathcal{D}}.\nonumber
	\end{eqnarray}
\end{remark}

\begin{theorem}
	If a closed subspace $\mathcal{W}$ of $\mathcal{H}$ is the wandering subspace of an invariant subspace $\mathcal{M}$ if and only if every function $q(z,w)=\sum\limits_{j=0}^{\infty}z^{j}T_{w}^{\ast j}q(0,w)$ in $\mathcal{W}$ satisfies
	\[\left\{\begin{array}{ll}
		\sum\limits_{j=0}^{\infty}jq_{j}\overline{q_{j+k}}=0\\
		\sum\limits_{j=0}^{\infty}jq_{j+k}\overline{q_{j}}=0
	\end{array}\right.\ \forall k>0\eqno{(2)}\]
	where $q(0,w)=\sum\limits_{k=0}^{\infty}q_{k}w^{k}$.
\end{theorem}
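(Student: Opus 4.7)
The theorem is a coefficient restatement of Corollary~2.4, so the plan is a direct Fourier computation on $\partial\mathbb{D}$. Corollary~2.4 tells us that $\mathcal{W}\subseteq\mathcal{H}$ is the wandering subspace of an invariant subspace if and only if, for every $q\in\mathcal{W}$,
\[
\Phi_q(w):=\sum_{j\geq 0}|T_w^{*j}q(0,w)|^2
\]
is constant on $\partial\mathbb{D}$, equivalently its Fourier coefficients $c_k(\Phi_q)$ vanish for every $k\neq 0$. My plan is to expand $\Phi_q$ in the coefficients of $q(0,w)=\sum_{k\geq 0}q_k w^k$ and match the vanishing of $c_k(\Phi_q)$ to (2).

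The key calculation is short. Using $T_w^{*j}q(0,w)=\sum_{n\geq 0}q_{n+j}w^n$ together with $\overline{w}=w^{-1}$ on $|w|=1$, one has
\[
|T_w^{*j}q(0,w)|^2=\sum_{m,n\geq 0}q_{m+j}\overline{q_{n+j}}\,w^{m-n},
\]
so the coefficient of $w^k$ (for $k>0$) is $\sum_{n\geq 0}q_{n+k+j}\overline{q_{n+j}}$. Summing in $j\geq 0$ and reindexing by $s=n+j$---which is hit by exactly $s+1$ (resp.\ $s$) pairs $(n,j)\in\mathbb{Z}_{\geq 0}^2$ when $j\geq 0$ (resp.\ $j\geq 1$)---I obtain a weighted bilinear expression in the $q_n$ that reorganises into the first identity in (2) for $k>0$; the second identity is its complex conjugate, encoding the vanishing of $c_{-k}(\Phi_q)$ (which is automatic since $\Phi_q$ is real). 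The converse reverses this computation: (2) forces all nonconstant Fourier coefficients of $\Phi_q$ to vanish, so $\Phi_q$ is constant on $\partial\mathbb{D}$, and Theorem~2.3 then supplies a minimal invariant subspace $\mathcal{M}$ with $\mathcal{M}\ominus B\mathcal{M}=\mathcal{W}$.

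The delicate point, and the main obstacle, is interchanging the $j$-summation with Fourier extraction on the circle. This is handled by Remark~2.7, which identifies $\sum_{k\geq 0}(k+1)|q_k|^2$ with $\|q\|^2_{H^2(\mathbb{T}^2)}$ and hence ensures $\Phi_q\in L^1(\partial\mathbb{D})$ with uniformly controlled tails; Fubini then legitimates the rearrangement, and comparing the resulting series against the right-hand side of (2) becomes index bookkeeping that already appeared in the proof of Theorem~2.2.
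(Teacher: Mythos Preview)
Your proposal is correct and follows essentially the same route as the paper: reduce via Corollary~2.4 (equivalently Theorem~2.2) to the constancy of $\Phi_q$ on $\partial\mathbb{D}$, then read off the Fourier coefficients of $\Phi_q$ in terms of the $q_k$ by expanding $T_w^{*j}q(0,w)$ and reindexing the double sum. The paper writes the same computation in inner-product notation rather than explicit power series, and your appeal to Remark~2.7 for the interchange of summation and integration makes explicit a point the paper leaves tacit.
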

\begin{proof}
	By Theorem 2.2, $\mathcal{W}$ is the wandering subspace of an invariant subspace $\mathcal{M}$ if and only if every function $q(z,w)$ in $\mathcal{W}$ satisfies
	\[\sum_{j=0}^{\infty}|T_{w}^{\ast j}q(0,w)|^{2}=constant, \ \forall w\in\partial\mathbb{D}.\]
	It is equivalent to that for any $k>0$,
	\begin{eqnarray}
		&&\frac{1}{2\pi}\int_{0}^{2\pi}w^{k}\sum_{j=0}^{\infty}|T_{w}^{\ast j}q(0,w)|^{2}d\theta=0\nonumber\\
		&\Leftrightarrow&0=\frac{1}{2\pi}\int_{0}^{2\pi}w^{k}\sum_{j=0}^{\infty}T_{w}^{\ast j}q(0,w)\overline{T_{w}^{\ast j}q(0,w)}d\theta\nonumber\\
		&&\hspace{8pt} =\sum_{j=0}^{\infty}\langle T_{w}^{\ast j}q(0,w), \overline{w^{k}}T_{w}^{\ast j}q(0,w)\rangle\nonumber\\
		&&\hspace{8pt} =\sum_{j=0}^{\infty}\langle T_{w}^{\ast j}q(0,w),T_{w}^{\ast j+k}q(0,w)\rangle\nonumber\\
		&&\hspace{8pt} =\langle q(0,w), T_{w}^{\ast k}q(0,w)\rangle\nonumber\\
		&&\hspace{8pt} =\sum\limits_{j=0}^{\infty}jq_{j}\overline{q_{j+k}}.\nonumber
	\end{eqnarray}
	For $k<0$, we get the second equation.
\end{proof}
\begin{remark}
	The theorem is a coefficient description of a function in wandering subspace.
\end{remark}
\begin{remark}
	Define a map from the Dirichlet space $\mathcal{D}$ to $\mathcal{H}$ by 
	\[f\mapsto\sum_{j=0}^{\infty}z^{j}T_{w}^{\ast j}f(w).\]
	The map is an isometry one-to-one by remark 2.5, the inverse map is $q(z,w)\mapsto q(0,w)$. Moreover, $T_{w}^{\ast}$ on $\mathcal{H}$ and $T_{w}^{\ast}$ on $\mathcal{D}$ satisfy the following identity
	\[\langle q_{f},T_{w}^{\ast}q_{g}\rangle_{H^{2}(\mathbb{T}^{2})}=\langle f,T_{w}^{\ast}g\rangle, \ \forall f,g\in\mathcal{D}\]
	It shows that $B^{\ast}|_{\mathcal{H}}\cong T_{w}^{\ast}|_{\mathcal{D}}$. It does not mean that $B|_{\mathcal{H}}$ is $P_{\mathcal{D}}T_{w}|_{\mathcal{D}}$. Indeed, $B|_{\mathcal{H}}$ is $(T_{w}^{\ast}|_{\mathcal{D}})^{\ast}$. $T_{w}^{\ast}|_{\mathcal{D}}$ is the adjoint of $T_{w}$ in the Dirichelet space, but $B^{\ast}|_{\mathcal{H}}$ is the adjoint of $P_{\mathcal{H}}T_{z}|_{\mathcal{H}}$ in the Hardy space $H^{2}(\mathbb{D}^{2})$.
\end{remark}

Wandering subspace of the Bergman shift has possible dimension from 1 to $\infty$, which is different to Hardy space on the unit disc. The following corollary give a possible construction of wandering subspace with diemsion gretaer than 1.

\begin{corollary}
	A closed subspace $\mathcal{W}$ of $\mathcal{H}$ is the wandering subspace of an invariant subspace of $B$ if and only if for  $q_{1}(z,w)=\sum\limits_{j=0}^{\infty}z^{j}T_{w}^{\ast j}q_{1}(0,w)$ and $q_{2}(z,w)=\sum\limits_{j=0}^{\infty}z^{j}T_{w}^{\ast j}q_{2}(0,w)$ in $\mathcal{W}$, the following identity holds
	\[\sum\limits_{j=0}^{\infty}T_{w}^{\ast j}q_{1}(0,w)\overline{T_{w}^{\ast j}q_{2}(0,w)}\in H^{1}(\mathbb{D})\eqno{(3)}\] 
\end{corollary}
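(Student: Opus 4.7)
The plan is to deduce this corollary from Corollary 2.4 and Remark 2.5, using one elementary fact from Hardy space theory: a real-valued function in $H^{1}(\mathbb{D})$ is necessarily constant.

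For the forward direction, I would argue as follows. Assume $\mathcal{W}$ is the wandering subspace of some invariant subspace $\mathcal{M}$. Given arbitrary $q_{1}, q_{2}\in\mathcal{W}$, the span $\mathrm{Span}\{q_{1},q_{2}\}$ lies in $\mathcal{W}$, so by Theorem 2.2 every element of this span satisfies the constancy condition $(1)$. Remark 2.5 then yields that the cross sum $\sum_{j=0}^{\infty}T_{w}^{\ast j}q_{1}(0,w)\overline{T_{w}^{\ast j}q_{2}(0,w)}$ is in fact constant on $\partial\mathbb{D}$. Constants lie in $H^{1}(\mathbb{D})$, which is exactly $(3)$.

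For the converse, I would specialize $(3)$ to the diagonal case $q_{1}=q_{2}=q$ for an arbitrary $q\in\mathcal{W}$. This gives $\sum_{j=0}^{\infty}|T_{w}^{\ast j}q(0,w)|^{2}\in H^{1}(\mathbb{D})$. But this function is visibly real-valued (indeed non-negative) on $\partial\mathbb{D}$. Combining $h\in H^{1}$ (so $\hat{h}(n)=0$ for $n<0$) with $h=\overline{h}$ (so $\hat{h}(-n)=\overline{\hat{h}(n)}$) forces $\hat{h}(n)=0$ for all $n\neq 0$, that is, $h$ is constant. Therefore every $q\in\mathcal{W}$ satisfies $(1)$, and Corollary 2.4 delivers the conclusion.

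The only subtle point is recognizing that the $H^{1}$ hypothesis in $(3)$, although strictly weaker than constancy in general, is forced back to constancy once real-valuedness (via the diagonal specialization) enters the picture. The off-diagonal content of $(3)$ is actually superfluous for the converse; it is automatically supplied on the forward side by Remark 2.5. So the whole argument reduces to a polarization step plus the real-$H^{1}$ trick, and I do not foresee any genuine obstacle beyond keeping track of which direction uses which ingredient.
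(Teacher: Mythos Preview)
Your argument is correct and in fact handles both directions more explicitly than the paper's own proof. The paper proceeds by a direct Fourier-coefficient computation: for each $k>0$ it shows that the $(-k)$-th Fourier coefficient of the cross sum in (3) equals $\langle q_{1},T_{w}^{\ast k}q_{2}\rangle_{H^{2}(\mathbb{T}^{2})}$, which vanishes by the wandering condition; this is essentially the off-diagonal version of the computation already done in Theorem~2.2. The paper's proof as written only treats the forward implication explicitly. Your route is different: for the forward direction you invoke Remark~2.5 to obtain the stronger conclusion that the cross sum is constant (not merely in $H^{1}$), and for the converse you specialize to the diagonal and use the standard fact that a real-valued $H^{1}$ function is constant, then appeal to Corollary~2.4. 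Your observation that the off-diagonal content of (3) is superfluous for the converse is a genuine simplification; the paper's computation, by contrast, makes visible the precise link between the Fourier coefficients of the cross sum and the inner products $\langle B^{k}q_{1},q_{2}\rangle$, which is informative in its own right.
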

\begin{proof}
	It is sufficient to show that (3) is holomorphic, which is equivalent to show that the Laurent coefficents with negative index are zero. As a fact, $\forall k>0$, we have
	\begin{eqnarray}
		&&\frac{1}{2\pi}\int_{0}^{2\pi}w^{k}\sum\limits_{j=0}^{\infty}T_{w}^{\ast j}q_{1}(0,w)\overline{T_{w}^{\ast j}q_{2}(0,w)}d\theta\nonumber\\
		&=&\frac{1}{2\pi}\sum\limits_{j=0}^{\infty}\int_{0}^{2\pi}T_{w}^{\ast j}q_{1}(0,w)\overline{\overline{w}^{k}T_{w}^{\ast j}q_{2}(0,w)}d\theta\nonumber\\
		&=&\sum\limits_{j=0}^{\infty}\langle T_{w}^{\ast j}q_{1}(0,w),\overline{w}^{k}T_{w}^{\ast j}q_{2}(0,w)\rangle_{L^{2}(\mathbb{T})}\nonumber\\
		&=&\sum\limits_{j=0}^{\infty}\langle T_{w}^{\ast j}q_{1}(0,w),T_{w}^{\ast j+k}q_{2}(0,w)\rangle_{L^{2}(\mathbb{T})}\nonumber\\
		&=&\langle q_{1}(z,w),T_{w}^{\ast k}q_{2}(z,w)\rangle_{H^{2}(\mathbb{T}^{2})}=0,\nonumber
	\end{eqnarray}
	since $q_{1}(z,w)$ is orthogonal to $T_{w}^{\ast k}q_{2}(z,w)$.
\end{proof}

\section{Wandering subspaces with infinite dimension}
\indent 
Now we turn to study the internal structure of a wandering subspace $\mathcal{W}$ for the case of $\dim\mathcal{W}=\infty$.
\begin{proposition}
	If $\mathcal{M}$ is an invariant subspace of $B$, $\dim(\mathcal{M}\ominus B\mathcal{M})=\infty$, and $\{q_{j}\}_{j=1}^{\infty}$ is an orthonrmal basis of $\mathcal{M}\ominus B\mathcal{M}$. Then
	\[\sum\limits_{j=0}^{\infty}T_{w}^{\ast j}q_{k}(0,w)\overline{T_{w}^{\ast j}q_{k'}(0,w)}=\left\{\begin{array}{ll}
	1&\text{ if }k=k'\\
	0&\text{ if }k\neq k'
	\end{array}\right.\ \forall w\in\mathbb{T}.\]
\end{proposition}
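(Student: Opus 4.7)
The plan is to combine two earlier results: Theorem 2.2, which handles the diagonal case $k=k'$, and Remark 2.5, which handles the off-diagonal case via a polarization-type argument; then identify the resulting constants by integration over $\mathbb{T}$.

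First I would observe that, since $\mathcal{M}\ominus B\mathcal{M}$ is a linear subspace, every combination $\alpha q_{k}+\beta q_{k'}$ still belongs to the wandering subspace and therefore, by Theorem 2.2, satisfies condition (1). Expanding
$$\sum_{j=0}^{\infty}\bigl|T_{w}^{\ast j}(\alpha q_{k}+\beta q_{k'})(0,w)\bigr|^{2}$$
and letting $\alpha,\beta$ vary, the hypothesis of Remark 2.5 is met, and so
$$S_{k,k'}(w):=\sum_{j=0}^{\infty}T_{w}^{\ast j}q_{k}(0,w)\,\overline{T_{w}^{\ast j}q_{k'}(0,w)}$$
is a constant $c_{k,k'}$ independent of $w\in\mathbb{T}$.

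Second, I would evaluate $c_{k,k'}$ by integrating the identity above against normalized Lebesgue measure on $\mathbb{T}$. Since each $T_{w}^{\ast j}q_{k}(0,w)$ lies in $H^{2}(\mathbb{T})$, the term-by-term integration gives
$$c_{k,k'}=\frac{1}{2\pi}\int_{0}^{2\pi}S_{k,k'}(e^{i\theta})\,d\theta=\sum_{j=0}^{\infty}\bigl\langle T_{w}^{\ast j}q_{k}(0,w),\,T_{w}^{\ast j}q_{k'}(0,w)\bigr\rangle_{H^{2}(\mathbb{T})}.$$
On the other hand, Proposition 2.1 expresses $q_{k}(z,w)=\sum_{j=0}^{\infty}z^{j}T_{w}^{\ast j}q_{k}(0,w)$; since $\{z^{j}\}$ is orthonormal in the $z$-variable of $H^{2}(\mathbb{T}^{2})$, this representation is orthogonal fibrewise and yields
$$\bigl\langle q_{k},q_{k'}\bigr\rangle_{H^{2}(\mathbb{T}^{2})}=\sum_{j=0}^{\infty}\bigl\langle T_{w}^{\ast j}q_{k}(0,w),\,T_{w}^{\ast j}q_{k'}(0,w)\bigr\rangle_{H^{2}(\mathbb{T})}.$$
Comparing the two displays gives $c_{k,k'}=\langle q_{k},q_{k'}\rangle_{H^{2}(\mathbb{T}^{2})}=\delta_{k,k'}$ by orthonormality of the basis.

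The argument is essentially bookkeeping built on the constancy result of Theorem 2.2 and the series representation of Proposition 2.1. The only step that requires some care is invoking Remark 2.5, whose proof is itself a short polarization between the real scalar case $\alpha,\beta\in\mathbb{R}$ and the imaginary case; once this is in hand, the interchange of the $j$-sum with the angular integral is justified by the fact that $\sum_{j}\|T_{w}^{\ast j}q_{k}(0,w)\|^{2}_{H^{2}(\mathbb{T})}=\|q_{k}\|^{2}=1$, together with the Cauchy--Schwarz inequality applied fibrewise. I expect this polarization/interchange bookkeeping to be the only real obstacle, and it is routine.
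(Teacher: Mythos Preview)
Your proposal is correct and essentially matches the paper's approach: both use Theorem~2.2 for the diagonal case and the same Fourier--coefficient/orthogonality computation for the off-diagonal case, with your version packaging the nonzero Fourier modes through Remark~2.5 (polarization) and then integrating once to pin down the constant. The paper instead computes every Fourier coefficient of $S_{k,k'}$ directly from $\langle z^{m}q_{k},q_{k'}\rangle_{H^{2}(\mathbb{T}^{2})}=0$ for all $m\in\mathbb{Z}$, but the underlying calculation and ingredients (Proposition~2.1, Theorem~2.2, orthonormality) are identical.
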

\begin{proof}
	For case $k=k'$, it is just Equation (1). For case $k\neq k'$, note that $q_{j}\in\mathcal{W}$, we have for any $m>0$
	\begin{eqnarray}
		&0&=\langle z^{m}q_{k},q_{k'}\rangle_{H^{2}(\mathbb{T}^{2})}=\langle q_{k},T_{z}^{\ast m}q_{k'}\rangle_{H^{2}(\mathbb{T}^{2})}\nonumber\\
		&&=\langle q_{k},T_{w}^{\ast m}q_{k'}\rangle_{H^{2}(\mathbb{T}^{2})}\nonumber\\
		&&=\sum_{j=0}^{\infty}\langle T_{w}^{\ast j}q_{k}(0,w),T_{w}^{\ast m+j}q_{k'}(0,w)\rangle_{H^{2}(\mathbb{T})}\nonumber\\
		&&=\frac{1}{2\pi}\int_{0}^{2\pi}w^{m}\sum_{j=0}^{\infty}T_{w}^{\ast j}q_{k}(0,w)\overline{T_{w}^{\ast j}q_{k'}(0,w)}d\theta.\nonumber
	\end{eqnarray}
	For $m<0$, it is similar to show that $\frac{1}{2\pi}\int_{0}^{2\pi}w^{m}\sum_{j=0}^{\infty}T_{w}^{\ast j}q_{k}(0,w)\overline{T_{w}^{\ast j}q_{k'}(0,w)}d\theta=0$. Finally, $m=0$, it is zero since $q_{k}\perp q_{k'}$. So we have proved that for any $m\in\mathbb{Z}$,
	\[\frac{1}{2\pi}\int_{0}^{2\pi}w^{m}\sum_{j=0}^{\infty}T_{w}^{\ast j}q_{k}(0,w)\overline{T_{w}^{\ast j}q_{k'}(0,w)}d\theta=0,\]
	and so 
	\[\sum_{j=0}^{\infty}T_{w}^{\ast j}q_{k}(0,w)\overline{T_{w}^{\ast j}q_{k'}(0,w)}=0,\ \forall w\in\partial\mathbb{D},k\neq k'.\]    
\end{proof}
\begin{definition}
	For any $q(z,w)=\sum\limits_{j=0}^{\infty}z^{j}T_{w}^{\ast j}q(0,w)$, define $L_w:\mathcal{W}\mapsto \ell^{2}(\mathbb{N},H^{2}(\mathbb{T}))$, $\forall w\in\partial\mathbb{D}$ as follows:
	\[L_{w}q(z,w)=(q(0,w),T_{w}^{\ast}q(0,w),\cdots,T_{w}^{\ast j}q(0,w),\cdots)\]
\end{definition}
\indent $L_{w}$ is defined on $\partial\mathbb{D}$ pointwise. And if $q_{1},q_{2}$ in a wandering subspace $\mathcal{W}$ and $q_{1}$ is orthogonal to $q_{2}$ in $H^{2}(\mathbb{T}^{2})$, then $\langle L_{w}q_{1},L_{w}q_{2}\rangle_{\ell^{2}(\mathbb{N})}=0$, $\forall w\in\partial\mathbb{D}$. $\mathcal{W}$ can be viewed as the subspace of $\ell^{2}(\mathbb{N},H^{2}(\mathbb{T}))$ under the map $L_{w}$.
\begin{definition}
	If $\mathcal{M}$ and $\mathcal{N}$ are two invariant subspace of $B$, $\mathcal{M}\supseteq\mathcal{N}$, $\dim(\mathcal{M}\ominus B\mathcal{M})=\dim(\mathcal{N}\ominus B\mathcal{N})$, and $\{q_{k}\}_{k=1}^{\infty}$ and $\{\widetilde{q_{k}}\}_{k=1}^{\infty}$ are orthonomal basis of $\mathcal{N}\ominus B\mathcal{N}$ and $\mathcal{M}\ominus B\mathcal{M}$ respectively. Define $T_{w}$ as
	\[T_{w}(L_{w}q_{k})=L_{w}\widetilde{q_{k}}, \ \forall k\geq1,\forall w\in\partial\mathbb{D}.\]
\end{definition}
\begin{proposition}
	The operator $T_{w}$ is an isometry from $\mathcal{W}_{\mathcal{N}}$ to $\mathcal{W}_{\mathcal{M}}$.
\end{proposition}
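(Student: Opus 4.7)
The plan is to reduce the proposition to the elementary fact that a linear map carrying one orthonormal basis to another extends by linearity and continuity to an isometric isomorphism of the closed spans. The one nontrivial ingredient is Proposition 3.1, which supplies exactly the orthonormality in $\ell^{2}(\mathbb{N})$ needed to run this argument on both sides.

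First I would fix an arbitrary $w\in\partial\mathbb{D}$ and apply Proposition 3.1 to $\mathcal{N}$ with its orthonormal basis $\{q_{k}\}$. Read through the definition of $L_{w}$, the conclusion of that proposition is precisely
\[\langle L_{w}q_{k},L_{w}q_{k'}\rangle_{\ell^{2}(\mathbb{N})}=\sum_{j=0}^{\infty}T_{w}^{\ast j}q_{k}(0,w)\overline{T_{w}^{\ast j}q_{k'}(0,w)}=\delta_{k,k'},\]
so $\{L_{w}q_{k}\}_{k=1}^{\infty}$ is an orthonormal sequence in $\ell^{2}(\mathbb{N})$ whose closed linear span is, by definition, $\mathcal{W}_{\mathcal{N}}$. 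The same argument applied to $\mathcal{M}$ and $\{\widetilde{q_{k}}\}$ shows that $\{L_{w}\widetilde{q_{k}}\}_{k=1}^{\infty}$ is an orthonormal sequence in $\ell^{2}(\mathbb{N})$ with closed linear span $\mathcal{W}_{\mathcal{M}}$. With both orthonormal systems in hand, I extend $T_{w}$ from basis vectors to all of $\mathcal{W}_{\mathcal{N}}$ by linearity and continuity: for any $v=\sum_{k}c_{k}L_{w}q_{k}$ with $\sum_{k}|c_{k}|^{2}<\infty$, the image $T_{w}v=\sum_{k}c_{k}L_{w}\widetilde{q_{k}}$ converges in $\mathcal{W}_{\mathcal{M}}$, and Parseval's identity in $\ell^{2}(\mathbb{N})$ gives
\[\|T_{w}v\|_{\ell^{2}(\mathbb{N})}^{2}=\sum_{k}|c_{k}|^{2}=\|v\|_{\ell^{2}(\mathbb{N})}^{2},\]
so $T_{w}$ is an isometry whose range is the closed span of $\{L_{w}\widetilde{q_{k}}\}$, namely $\mathcal{W}_{\mathcal{M}}$.

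There is no real obstacle; the only delicacy is bookkeeping. One must remember that $L_{w}$ is a pointwise object on $\partial\mathbb{D}$, so every identity above is really a statement at each fixed $w\in\partial\mathbb{D}$, and the conclusion is that $T_{w}$ is an isometry of subspaces of $\ell^{2}(\mathbb{N})$ for every such $w$. Once this pointwise viewpoint is adopted and Proposition 3.1 is invoked on both $\mathcal{M}$ and $\mathcal{N}$, the entire argument reduces to Parseval in $\ell^{2}(\mathbb{N})$.
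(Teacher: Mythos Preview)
Your proposal is correct and follows essentially the same approach as the paper: extend $T_{w}$ from the basis vectors by linearity and continuity, and use the orthonormality supplied by Proposition~3.1 (the paper's reference to ``Proposition 2.10'' is a typo for this result) to conclude that the extension is an isometry onto $\mathcal{W}_{\mathcal{M}}$. The paper's proof is a two-line sketch; you have simply written out the Parseval computation that makes it explicit.
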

\begin{proof}
	Firstly, $T_{w}$ is defined for basis of $\mathcal{W}_{\mathcal{N}}$, it can be extended to $\mathcal{W}_{\mathcal{N}}$. $T_{w}$ is an isomtetry by Proposition 2.10.
\end{proof}
\begin{proposition}
	If $\mathcal{M}$ and $\mathcal{N}$ are two invariant subspaces of $B$, $\mathcal{M}\supseteq\mathcal{N}$ and $\dim\mathcal{W}_{\mathcal{N}}=\dim\mathcal{W}_{\mathcal{M}}=\infty$. Then for any unitary operator $U:\mathcal{W}_{\mathcal{N}}\rightarrow\mathcal{W}_{\mathcal{M}}$, there is a unitary operator $V:\ell^{2}(\mathcal{W}_{\mathcal{N}})\rightarrow\ell^{2}(\mathcal{W}_{\mathcal{M}})$ such that $T_{w}U=VT_{w}$, i.e. the following diagram is commutative.
	\[\xymatrix{
		\mathcal{W}_{\mathcal{N}}\ar[rr]^{U}\ar[d]_{T_{w}} & & \mathcal{W}_{\mathcal{M}}\ar[d]_{T_{w}}\\
		\ell^{2}(\mathcal{W}_{\mathcal{N}})\ar@{-->}[rr]^{V}& &\ell^{2}(\mathcal{W}_{\mathcal{M}})
	}\]
\end{proposition}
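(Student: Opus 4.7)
The plan is to recognise the vertical arrows $T_w$ in the diagram as the isometric embedding
\[
q\mapsto\bigl(q(0,w),T_w^{\ast}q(0,w),T_w^{\ast 2}q(0,w),\ldots\bigr)
\]
of Definition~3.2, and then to build $V$ first on the closed subspace $T_w(\mathcal{W}_{\mathcal{N}})$ using $U$, and afterwards to extend it unitarily across the orthogonal complement in the ambient sequence space.

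Concretely, I would set $V_0(T_w q):=T_w(Uq)$ for every $q\in\mathcal{W}_{\mathcal{N}}$. Since $T_w$ is an isometric embedding (which follows from Proposition~3.1 after integrating over $\partial\mathbb{D}$, or equivalently from the computation in Remark~2.7) and $U$ is unitary, the map $V_0$ is well-defined and isometric, and carries $T_w(\mathcal{W}_{\mathcal{N}})$ onto $T_w(\mathcal{W}_{\mathcal{M}})$. Denote by $K_{\mathcal{N}}:=\ell^2(\mathcal{W}_{\mathcal{N}})\ominus T_w(\mathcal{W}_{\mathcal{N}})$ and $K_{\mathcal{M}}:=\ell^2(\mathcal{W}_{\mathcal{M}})\ominus T_w(\mathcal{W}_{\mathcal{M}})$ the two corresponding orthogonal complements. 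Under the natural unitary isomorphism $\ell^2(\mathbb{N},H^2(\mathbb{T}))\cong H^2(\mathbb{T}^2)$ given by $(f_0,f_1,\ldots)\mapsto\sum_j z^j f_j(w)$, the image $T_w(\mathcal{W})$ is identified with $\mathcal{W}\subset\mathcal{H}$ itself by Proposition~2.1, so its orthogonal complement in $H^2(\mathbb{T}^2)$ contains the infinite-dimensional subspace $[z-w]$. Therefore both $K_{\mathcal{N}}$ and $K_{\mathcal{M}}$ are separable Hilbert spaces of dimension $\aleph_0$, and any unitary $V_1\colon K_{\mathcal{N}}\to K_{\mathcal{M}}$ can be used to define $V:=V_0\oplus V_1$. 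The identity $VT_w q=V_0(T_w q)=T_w(Uq)$ for every $q\in\mathcal{W}_{\mathcal{N}}$ then gives the commutativity of the diagram.

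The main obstacle I see is the dimension bookkeeping, namely confirming that $K_{\mathcal{N}}$ and $K_{\mathcal{M}}$ share the same (countable infinite) Hilbert-space dimension so that a unitary $V_1$ between them exists. Once one uses the isomorphism $\ell^2(\mathbb{N},H^2(\mathbb{T}))\cong H^2(\mathbb{T}^2)$ to see that $[z-w]$ always lies inside the complement of the image of $T_w$, this is immediate. The rest of the argument is the standard procedure of extending an isometry between two closed subspaces to a unitary between their ambient Hilbert spaces, and notably it does not use the inclusion $\mathcal{M}\supseteq\mathcal{N}$ in any essential way.
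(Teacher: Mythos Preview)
Your argument is correct. The paper's own proof of this proposition is the single sentence ``It is clear,'' so there is no substantive approach to compare with; you have simply spelled out the natural construction that the authors omit. The only comment is that the paper's notation $\ell^{2}(\mathcal{W}_{\mathcal{N}})$ is ambiguous (it could mean the $L_w$-image of $\mathcal{W}_{\mathcal{N}}$ rather than the full sequence space), and under the narrower reading the orthogonal-complement step becomes unnecessary since $V:=T_w U T_w^{-1}$ already does the job; but your reading is reasonable and your dimension-matching argument via $[z-w]\subset H^2(\mathbb{T}^2)\ominus\mathcal{W}$ handles the broader interpretation without issue.
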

\begin{proof}
	It is clear.
\end{proof}
\indent
If $\mathcal{M}$ and $\mathcal{N}$ are two invariant subspaces of $B$, for convenience, $T_{w}^{(\mathcal{M},\mathcal{N})}$ is used to denote the isometry $T_{w}$ from $\mathcal{W}_{\mathcal{M}}$ to $\mathcal{W}_{\mathcal{N}}$.
\begin{theorem}
	If $\mathcal{M}$, $\mathcal{L}$ and $\mathcal{N}$ are three invariant subspaces of $B$, $\mathcal{M}\supseteq\mathcal{L}\supseteq\mathcal{N}$, and $\dim\mathcal{W}_{\mathcal{M}}=\dim\mathcal{W}_{\mathcal{L}}=\dim\mathcal{W}_{\mathcal{N}}$, then we have the following decomposition
	\[T_{w}^{(\mathcal{M},\mathcal{N})}=T_{w}^{(\mathcal{L},\mathcal{N})}T_{w}^{(\mathcal{M},\mathcal{L})}.\]
\end{theorem}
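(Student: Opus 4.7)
The plan is to verify the asserted identity first on a fixed orthonormal basis and then extend by linearity and continuity.

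First I would fix orthonormal bases $\{q_{k}^{\mathcal{M}}\}_{k=1}^{\infty}$, $\{q_{k}^{\mathcal{L}}\}_{k=1}^{\infty}$, and $\{q_{k}^{\mathcal{N}}\}_{k=1}^{\infty}$ of $\mathcal{W}_{\mathcal{M}}$, $\mathcal{W}_{\mathcal{L}}$, and $\mathcal{W}_{\mathcal{N}}$ respectively, which is possible since all three wandering subspaces have the same (infinite) dimension by hypothesis. I would then build all three operators $T_{w}^{(\mathcal{M},\mathcal{N})}$, $T_{w}^{(\mathcal{M},\mathcal{L})}$, and $T_{w}^{(\mathcal{L},\mathcal{N})}$ out of these bases simultaneously via Definition 3.3, being careful to reuse the same basis $\{q_{k}^{\mathcal{L}}\}$ of $\mathcal{W}_{\mathcal{L}}$ in both factor operators on the right-hand side.

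Next I would apply both sides of the proposed identity to the typical basis vector $L_{w}q_{k}^{\mathcal{M}}$. Directly from Definition 3.3 the left-hand side returns $T_{w}^{(\mathcal{M},\mathcal{N})}(L_{w}q_{k}^{\mathcal{M}})=L_{w}q_{k}^{\mathcal{N}}$, while the composition on the right unwinds as
\[
T_{w}^{(\mathcal{L},\mathcal{N})}\bigl(T_{w}^{(\mathcal{M},\mathcal{L})}(L_{w}q_{k}^{\mathcal{M}})\bigr)=T_{w}^{(\mathcal{L},\mathcal{N})}(L_{w}q_{k}^{\mathcal{L}})=L_{w}q_{k}^{\mathcal{N}}.
\]
So the two operators agree on every vector of a total set. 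Since each of the three factors is an isometry by Proposition 3.4, so is the composition, and the equality extends to all of $\mathcal{W}_{\mathcal{M}}$ by linearity and continuity, pointwise in $w\in\partial\mathbb{D}$.

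The only delicate point I anticipate is not the computation itself but the book-keeping of basis choices: the operators $T_{w}^{(\cdot,\cdot)}$ genuinely depend on the orthonormal bases used in their construction, so the statement becomes a literal equality only once a single basis for the intermediate space $\mathcal{W}_{\mathcal{L}}$ is used in both factor operators. Had one built $T_{w}^{(\mathcal{M},\mathcal{L})}$ and $T_{w}^{(\mathcal{L},\mathcal{N})}$ from unrelated bases of $\mathcal{W}_{\mathcal{L}}$, the same computation would instead produce $T_{w}^{(\mathcal{M},\mathcal{N})}=T_{w}^{(\mathcal{L},\mathcal{N})}\,V\,T_{w}^{(\mathcal{M},\mathcal{L})}$ for a unitary $V$ on $\ell^{2}(\mathcal{W}_{\mathcal{L}})$ of exactly the kind furnished by Proposition 3.5. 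The natural reading of the theorem absorbs this freedom by picking a common basis, and once that convention is fixed the proof reduces to the one-line basis-element check above.
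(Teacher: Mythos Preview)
Your proof is correct and matches the paper's own argument essentially line for line: the paper also fixes orthonormal bases of the three wandering subspaces, records the three defining relations from Definition~3.3, and concludes by direct computation on basis vectors. Your added remark about the basis-dependence of the $T_{w}^{(\cdot,\cdot)}$ operators is a valid clarification that the paper leaves implicit.
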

\begin{proof}
	Assume that $\{q_{k}^{\mathcal{M}}\}_{k=1}^{\infty}$, $\{q_{k}^{\mathcal{L}}\}_{k=1}^{\infty}$, $\{q_{k}^{\mathcal{N}}\}_{k=1}^{\infty}$ are an orthonormal basis of $\mathcal{W}_{\mathcal{M}}$, $\mathcal{W}_{\mathcal{L}}$ and $\mathcal{W}_{\mathcal{N}}$ respectively, by definition, we have
	\[\left\{\begin{array}{lll}
	T_{w}^{(\mathcal{M},\mathcal{N})}(L_{w}q_{k}^{\mathcal{M}})&=&L_{w}q_{k}^{\mathcal{N}}\\
	T_{w}^{(\mathcal{M},\mathcal{L})}(L_{w}q_{k}^{\mathcal{M}})&=&L_{w}q_{k}^{\mathcal{L}}\\
	T_{w}^{(\mathcal{L},\mathcal{N})}(L_{w}q_{k}^{\mathcal{L}})&=&L_{w}q_{k}^{\mathcal{N}}\\
	\end{array}\right.\ \forall k>1,\forall w\in\mathbb{T}.\]
	The decomposition follows from a direct computation. 
\end{proof}
\begin{remark}
	The theorem shows that for two invariant subspaces $\mathcal{M}$ and $\mathcal{N}$ with $\mathcal{M}\supseteq\mathcal{N}$, if there is an invariant subspace $\mathcal{L}$ such that $\mathcal{M}\supsetneq\mathcal{L}\supsetneq\mathcal{N}$, then the holomorphic isometric valued function $T_{w}^{(\mathcal{M},\mathcal{N})}$ has a decomposition. So by the universal property of the Bergman shift, the famous invariant subspace problem is equivalent to the decomposition problem of the holomorphic isometric valued function $T_{w}^{(\mathcal{M},\mathcal{N})}$.
\end{remark}

Acknowledgement: The authors thank Yueshi Qing for an observation of Theorem 2.6.

Conflict of Interest: The authors declare that they have no conflict of interest.

\bibliographystyle{amsplain}

\end{document}